\newcommand{\E}{\mathcal E}
\newcommand{\w}{\omega}
\theoremstyle{plain}
\newtheorem{theorem}{Theorem}
\newtheorem{example}{Example}
\newtheorem{question}{Question}
\theoremstyle{definition}
\newtheorem{remark}{Remark}
\begin{document}

\title{Minmax bornologies}

\author{Taras Banakh, Igor Protasov}
\address{T.Banakh: Ivan Franko National University of Lviv, Universytetska 1, 79000, Lviv, Ukraine}
\email{t.o.banakh@gmail.com}
\address{I.Protasov: Faculty of Computer Science and Cybernetics, Kyiv University,         Academic Glushkov pr. 4d, 03680, Kyiv, Ukraine}
\email{i.v.protasov@gmail.com}
\subjclass{54E99, 54D80}
\keywords{Bornology,   coarse structure, ballean,  isomorphic  and   coherent   ultrafilters.}

\vskip 5pt

\begin{abstract}
A bornology  $\mathcal{B}$ on a set $X$  is called minmax if the smallest and the largest coarse structures on $X$  compatible with $\mathcal{B}$  coincide.
  We prove that $\mathcal{B}$ is  minmax if and only if the family $\mathcal B^\sharp=\{p\in\beta X:\{X\setminus B:B\in\mathcal B\}\subset p\}$ consists of ultrafilters which are pairwise non-isomorphic via $\mathcal B$-preserving bijections  of  $X$. Also we construct a minmax bornology $\mathcal B$ on $\w$ such that the set $\mathcal B^\sharp$ is infinite. We deduce this result from the existence of a  closed infinite subset in $\beta\w$ that consists of pairwise non-isomorphic ultrafilters. 
\end{abstract}
\maketitle

\section{Introduction}

Let $X$  be a set. A family $\mathcal{E}$ of subsets of $X\times X$ is called a {\it coarse structure } if

\begin{itemize}
\item  each $E\in \mathcal{E}$  contains the diagonal  $\bigtriangleup _{X}:= \{(x,x): x\in X\}$ of $X$;
\item  if  $E$, $E^{\prime} \in \mathcal{E}$ then $E\circ E^{\prime}\in\mathcal{E}$ and
$E^{-1}\in \mathcal{E}$,   where    $E\circ E^{\prime}=\{(x,y): \exists z((x,z) \in  E,  \   \ (z, y)\in E^{\prime})\}$ and   $E^{-1}=\{(y,x): (x,y)\in E\}$;
\item if $E\in\mathcal{E}$ and $\bigtriangleup_{X}\subseteq E^{\prime}\subseteq E  $   then
$E^{\prime}\in \mathcal{E}$;
\item  $\bigcup\mathcal E=X\times X$.
\end{itemize}
\vskip 7pt

A subfamily $\mathcal{E}^{\prime} \subseteq \mathcal{E}$  is called a
{\it base} for $\mathcal{E}$  if for every $E\in \mathcal{E}$ there exists
  $E^{\prime}\in \mathcal{E}^{\prime}$  such  that
  $E\subseteq E ^{\prime}$.
For $x\in X$,  $A\subseteq  X$  and
$E\in \mathcal{E}$, we denote
$E[x]= \{y\in X: (x,y) \in E\}$,
 $E [A] = \bigcup_{a\in A}   \   \   E[a]$
 and say that  $E[x]$
  and $E[A]$
   are {\it balls of radius $E$
   around} $x$  and $A$.

The pair $(X,\mathcal{E})$ is called a {\it coarse space} \cite{b4}  or a {\em ballean} \cite{b2}, \cite{b3}.

For a coarse space $(X,\E)$, a subset $B\subset X$ is called {\it bounded} if $B\subset E[x]$  for some $E\in \mathcal{E}$ and $x\in X$.
A coarse space $(X,\mathcal{E})$ is  called {\it  unbounded} if $X$ is unbounded.
In what follows,  all
 balleans under consideration are {\bf supposed to
 be unbounded}.

A family $\mathcal{B}$ of subsets of an infinite set $X$ is called a {\it bornology} on $X$ if  $\cup\mathcal{B}=X \notin \mathcal{B}$ and $\mathcal{B}$ is
closed under taking subsets and
finite  unions.
For a coarse space $(X, \mathcal{E})$,
we denote by
$\mathcal{B}_{(X, \mathcal{E})}$
the bornology
  of all bounded subsets of $(X, \mathcal{E})$.

A  coarse structure $(X, \mathcal{E})$ is called
\vskip 5pt

\begin{itemize}
\item {\it discrete}  if for every $E \in \mathcal{E}$   there exists
  $B\in \mathcal{B}_{(X, \mathcal{E})} $
   such that  $E [x]=\{ x\}$  for each
$x\in X \backslash B$;
\item{}  {\it ultradiscrete }  if
$(X, \mathcal{E})$
 is discrete and the  family
$\{ X \backslash B:  B \in \mathcal{B}_{(X, \mathcal{E})}\}$ is an ultrafilter;
\item{}   {\it  maximal}  if   $(X, \mathcal{E})$ is bounded  in every strictly stronger coarse structure  on $X$.
\end{itemize}

\vskip 7pt

Let $\mathcal{B}$  be a bornology on   $X$.
Following \cite{b1},  we say that a coarse  structure $\mathcal{E}$
on $X$  is {\it compatible}   with  $\mathcal{B}$  if
 $\mathcal{B}= \mathcal{B}_{(X, \mathcal{E})}$.

By \cite[\S6]{b1}, the family of all coarse structures  on $X$,  compatible  with  $\mathcal{B}$,   has  the smallest and the largest   elements  ${\Downarrow}\mathcal B$  and  ${\Uparrow}\mathcal B$, respectively.

The smallest coarse structure ${\Downarrow} \mathcal{B}$, compatible with the bornology $\mathcal B$,  is generated by the base consisting of the  entourages
$(B\times  B)\cup\bigtriangleup_{X}$,
 where  $B\in \mathcal{B}$.
A coarse structure $\mathcal{E}$  on   $X$  is  discrete  if and only if
$(X, \mathcal{B})= {\Downarrow}\mathcal{B}_{(X, \mathcal{E})}$.
A discrete coarse  structure is  maximal  if and  only if $\mathcal{E}$  is ultradiscrete \cite[Example 10.1.2]{b3}.

The largset coarse structure ${\Uparrow}\mathcal B$, compatible with the bornology $\mathcal B$, consists of all entourages $E\subset X\times X$ such that for any set $B\in\mathcal B$ the set $E[B]\cup E^{-1}[B]$ belongs to $\mathcal B$, see \cite[\S 6]{b1}.

\section{Characterizing minmax bornologies}

A bornology  $\mathcal{B}$ on a set $X$  is called {\it   minmax} if  ${\Downarrow} \mathcal B={\Uparrow}\mathcal B$.
Equivalently,  $\mathcal{B}$   is  minmax  if $\mathcal B$ is compatible with a unique coarse structure on $X$. It is clear that each ultradiscrete bornology is minmax. In this section we show that the converse is not true.


We recall that two ultrafilters  $p, q$  on a set $X$  are {\it isomorphic}  if there exists a bijection
 $f: X\to  X$  such that the ultrafilter $\bar f(p):=\{f(P):P\in p\}$ is equal to $q$.
 
Let $\mathcal B$ be a bornology on a set $X$. We say that two ultrafilters $p, q$ on $X$  are  $\mathcal{B}$-{\it isomorphic} if there is a bijection $f:X\to X$ such that
$f(p)=q$ and $\{f(B):B\in\mathcal B\}=\mathcal B$.

We denote by  $\mathcal{B}^{\sharp}$  the set of all ultrafilters  $p$  on $X$  such  that $\{X\setminus B:B\in\mathcal B\} \subset p$. Observe that a bornology $\mathcal B$ is ultradiscrete if and only if $\mathcal B^\sharp$ is a singleton.

\begin{theorem}\label{t:main}
A bornology  $\mathcal{B}$   on a  set $X$  is  minmax  if and only if every   two  distinct   ultrafilters   $p, q\in \mathcal{B}^\sharp$   are   not  $\mathcal{B}$-isomorphic.
\end{theorem}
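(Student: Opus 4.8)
The plan is to prove both implications by contraposition, in each case translating the (in)equality of the two extreme compatible structures into the existence of a $\mathcal B$-preserving bijection moving an ultrafilter of $\mathcal B^\sharp$. A principle I will use repeatedly is that any bijection $f$ whose graph lies inside a symmetric $E\in{\Uparrow}\mathcal B$ preserves $\mathcal B$: for $B\in\mathcal B$ one has $f(B)\subseteq E[B]\in\mathcal B$ and $f^{-1}(B)\subseteq E^{-1}[B]=E[B]\in\mathcal B$, so $\{f(B):B\in\mathcal B\}=\mathcal B$.

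For the implication ``minmax $\Rightarrow$ no $\mathcal B$-isomorphic pair'', I would start from distinct $\mathcal B$-isomorphic ultrafilters $p\neq q$ of $\mathcal B^\sharp$, witnessed by a bijection $f$ with $\bar f(p)=q$ and $\{f(B):B\in\mathcal B\}=\mathcal B$, and build an entourage separating ${\Downarrow}\mathcal B$ from ${\Uparrow}\mathcal B$. The candidate is the symmetrized graph $E=\bigtriangleup_X\cup\{(x,f(x)):x\in X\}\cup\{(f(x),x):x\in X\}$. Since $f$ and $f^{-1}$ preserve $\mathcal B$, for every $B\in\mathcal B$ the set $E[B]\cup E^{-1}[B]=B\cup f(B)\cup f^{-1}(B)$ lies in $\mathcal B$, so $E\in{\Uparrow}\mathcal B$ by the description of the largest compatible structure. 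On the other hand, $E\in{\Downarrow}\mathcal B$ would force the support $S=\{x:f(x)\neq x\}$ into $\mathcal B$; but then $X\setminus S\in p$ because $p\in\mathcal B^\sharp$, and $f$ fixes $X\setminus S$ pointwise, giving $\bar f(p)=p\neq q$, a contradiction. Hence $E\in{\Uparrow}\mathcal B\setminus{\Downarrow}\mathcal B$ and $\mathcal B$ is not minmax.

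The reverse implication is the substantial one. Assuming $\mathcal B$ is not minmax, I would pick $E\in{\Uparrow}\mathcal B\setminus{\Downarrow}\mathcal B$ and, replacing it by $E\cup E^{-1}\cup\bigtriangleup_X$, assume $E$ symmetric and reflexive and still in the gap. First I record the reformulation: for symmetric $E\in{\Uparrow}\mathcal B$ one has $E\in{\Downarrow}\mathcal B$ if and only if $U:=\{x:E[x]\neq\{x\}\}\in\mathcal B$, the nontrivial direction using that $E[U]\in\mathcal B$ once $U\in\mathcal B$, so that $E\subseteq(E[U]\times E[U])\cup\bigtriangleup_X$. Thus in our situation $U$ is unbounded. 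The heart of the argument is to convert the shapeless relation $E$ into an honest $\mathcal B$-preserving bijection, and I would do this through a \emph{maximal matching} $M$ in the graph with vertex set $X$ and edge set $E\setminus\bigtriangleup_X$, which exists by Zorn's lemma. Maximality forces every non-isolated vertex into $V(M)\cup E[V(M)]=E[V(M)]$, so $U\subseteq E[V(M)]$; were $V(M)$ bounded, then $E[V(M)]$, and hence $U$, would be bounded, a contradiction. Therefore $V(M)$ is unbounded.

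Finally I would let $f$ be the involution interchanging the two endpoints of each edge of $M$ and fixing every other point. Its graph lies in the symmetric $E\in{\Uparrow}\mathcal B$, so by the principle above $f$ preserves $\mathcal B$, and its support is the unbounded set $V(M)$. Writing $V(M)=A\sqcup B$ by selecting one endpoint of each matched pair, so that $f(A)=B$ with $A\cap B=\emptyset$, and choosing $p\in\mathcal B^\sharp$ with $V(M)\in p$ (possible since $V(M)\notin\mathcal B$), one of $A,B$ lies in $p$, say $A$; then $\bar f(p)\ni f(A)=B\notin p$, so $q:=\bar f(p)\neq p$. As $f$ preserves $\mathcal B$, the map $\bar f$ carries $\mathcal B^\sharp$ into itself, whence $q\in\mathcal B^\sharp$, and $p\neq q$ are $\mathcal B$-isomorphic via $f$. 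I expect the matching step to be the main obstacle: extracting a bijection with unbounded support from a single large entourage, where maximality is exactly what upgrades ``$E$ spreads over an unbounded set'' to ``$V(M)$ is unbounded'', and where membership of $E$ in ${\Uparrow}\mathcal B$ — the boundedness of balls over bounded sets — is used precisely to rule out $V(M)$ being bounded.
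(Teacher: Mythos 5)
Your proof is correct and follows essentially the same strategy as the paper's: the graph of the witnessing bijection yields the entourage in ${\Uparrow}\mathcal B\setminus{\Downarrow}\mathcal B$ for the ``only if'' part, and for the ``if'' part a maximal combinatorial selection inside a non-discrete symmetric $E\in{\Uparrow}\mathcal B$ (your maximal matching, versus the paper's maximal family of pairwise disjoint balls $E[y]$) produces an involution with unbounded support whose graph lies in $E$, hence a $\mathcal B$-preserving bijection moving some $p\in\mathcal B^\sharp$. The only differences are cosmetic, and you supply some details (why the support is unbounded, why $\bar f(p)\ne p$) that the paper leaves implicit.
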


\begin{proof}  To prove the ``only if'' part, assume that there exist two distinct  $\mathcal{B}$-isomorphic  ultrafilters  $p, q $  in  $\mathcal{B}$  and take a bijection  $h:  X\to X$  witnessing  this fact. Since  $p\neq  q$,   the set  $\{x \in X:  f(x)\neq  x \}$ does not belong to the bornology $\mathcal B$.  Then the entourage $E=\{(x,y):x\in X:y\in\{x,f(x)\}\}$ belongs to the coarse structure ${\Uparrow}\mathcal B$ and witnesses that it is not discrete and hence not equal to ${\Downarrow}\mathcal B$.
This means that $\mathcal B$ is not minmax.
\smallskip

To prove the ``if'' part, assume that $\mathcal{B}$  is not minmax.
Then the coarse structure ${\Uparrow} \mathcal{B}$
  is not discrete  and there exists
  $E\in {\Uparrow} \mathcal{B}$
     such that the set
   $\{x \in X: |E[x]|>  1 \}$
     does not belong to the bornology $\mathcal B$. 
We  take a maximal by inclusion  subset
 $Y \subseteq \{x \in X: |E[x]|>  1 \}$
 such that
   $E[y]\cap  E[z] = \emptyset$ for all  distinct $y, z\in Y$.
 We note that $Y$ does not belong to $\mathcal B$ and
  and take an arbitrary  ultrafilter
  $p\in \mathcal{B}^{\sharp}$
       such that  $Y\in p$.
For each  $y\in Y$ choose $z _{y}\in E [y]\setminus y$ and consider  the  bijection
 $f: X \to X$  acting as the transposition on  each pair  $y, z _{y}$  and   identical on all other  elements  of $X$.
Observe that $\{f(B):B\in\mathcal B\}=\mathcal B$ and hence $p\ne \bar f(p)\in\mathcal B^\sharp$. So $p$ and $\bar f(p)$ are two distinct $\mathcal B$-isomorphic ultrafilters in $\mathcal B^\sharp$.
\end{proof}

The following example was first presented in \cite[Example 1]{b3}.

\begin{example} There exists a minmax bornology $\mathcal B$ on $\w$ which is not ultradiscrete.
\end{example}

\begin{proof} Choose any two non-isomorphic ultrafilters $p,q$ on $\w$ and consider the bornology $\mathcal B=\{B\subset \w:\w\setminus B\in p\cap q\}$. Since $\mathcal B^\sharp=\{p,q\}$, the bornology $\mathcal B$ is minmax (by Theorem~\ref{t:main}) and not ultradiscrete.
\end{proof}

Now we shall construct a minmax bornology $\mathcal B$ on $\w$ for which the set $\mathcal B^\sharp$ has cardinality $2^{\mathfrak c}$. For this we need the following fact, which can have an independent value.

\begin{theorem}\label{l:main} The Stone-\v Cech compactification $\beta\omega$ of $\w$ contains a closed infinite subset consisting of pairwise non-isomorphic ultrafilters.
\end{theorem}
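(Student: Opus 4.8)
The plan is to exploit the fact that the closed subsets of $\beta\w$ are exactly the sets $\hat{\mathcal F}=\{p\in\beta\w:\mathcal F\subseteq p\}$ for filters $\mathcal F$ on $\w$, so that $\hat{\mathcal F}$ is the exact analogue of the set $\mathcal B^\sharp$ appearing in Theorem~\ref{t:main}. Thus I would look for a single filter $\mathcal F$ on $\w$ such that (i) $\hat{\mathcal F}$ is infinite and (ii) no bijection of $\w$ carries one point of $\hat{\mathcal F}$ to a \emph{different} point of $\hat{\mathcal F}$. For (i) it suffices to keep infinitely many pairwise disjoint $\mathcal F$-positive sets, since each such set carries its own extension of $\mathcal F$. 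It is worth noting that once $\hat{\mathcal F}$ is infinite it automatically contains a copy of $\beta\w$ and so has cardinality $2^{\mathfrak c}$; this is why one cannot hope to enumerate its points directly and must instead control them through $\mathcal F$. Condition (ii) then says precisely that the $2^{\mathfrak c}$ ultrafilters of $\hat{\mathcal F}$ are pairwise non-isomorphic.

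The engine of the argument is a dichotomy for a single bijection $f:\w\to\w$, applied to its support $M=\{n:f(n)\ne n\}$. If I manage to put some set on which $f$ is the identity into $\mathcal F$ (that is, $\w\setminus M$ becomes $\mathcal F$-large), then $\bar f(p)=p$ for every $p\in\hat{\mathcal F}$, so $f$ relates no two distinct points of $\hat{\mathcal F}$. Otherwise I would add to $\mathcal F$ an infinite $f$-independent set $S\subseteq M$, meaning $S\cap f^{-1}(S)=\emptyset$; such sets exist because the functional graph with edges $\{n,f(n)\}$ has vertex degree at most $2$ and hence splits into few independent pieces. For any $p\in\hat{\mathcal F}$ one then has $S\in p$ while $f^{-1}(S)\notin p$, so $S\notin\bar f(p)$ and therefore $\bar f(p)\notin\hat{\mathcal F}$; again $f$ relates no two points of $\hat{\mathcal F}$. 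In either case the offending bijection is neutralised by a single membership decision added to $\mathcal F$.

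I would then build $\mathcal F$ by transfinite recursion of length $\mathfrak c$, fixing an enumeration $\{f_\alpha:\alpha<\mathfrak c\}$ of $\mathrm{Sym}(\w)$ and neutralising $f_\alpha$ at stage $\alpha$ via the dichotomy above, so that in the end no bijection relates two distinct points of $\hat{\mathcal F}$ and (ii) holds. To guarantee (i) I would reserve in advance a partition $\w=\bigsqcup_{k\in\w}Z_k$ into infinite sets and carry along the invariant that infinitely many of the blocks $Z_k$ remain $\mathcal F_\alpha$-positive; each neutralising set $S_\alpha$ must then be chosen $f_\alpha$-independent and meeting infinitely many reserved blocks in infinite sets.

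The main obstacle is exactly this balancing act: one must simultaneously neutralise every one of the $\mathfrak c$ bijections and keep the reserved positive family alive through all $\mathfrak c$ stages. The subtlety is that a filter generated by fewer than $\mathfrak c$ sets can already be an ultrafilter (when $\mathfrak u<\mathfrak c$), so mere cardinality bookkeeping does not keep $\hat{\mathcal F}$ infinite; the positivity of the reserved blocks must be protected by hand at every step. Concretely, the crux is the combinatorial claim that for each bijection $f$ one can find an $f$-independent set that is positive with respect to the current filter and still meets infinitely many reserved blocks in infinite sets, which is where the degree-$\le 2$ structure of the functional graph of $f$ and a greedy selection across the blocks $Z_k$ must be combined. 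Once this is in place, (i) and (ii) both hold, and $\hat{\mathcal F}$ is the desired closed infinite subset of $\beta\w$ consisting of pairwise non-isomorphic ultrafilters.
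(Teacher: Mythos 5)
Your per-bijection dichotomy is sound: if some set on which $f$ is the identity lies in $\mathcal F$ then $\bar f$ fixes every point of $\hat{\mathcal F}$, and if an $f$-independent set $S\subseteq M$ with $S\cap f^{-1}(S)=\emptyset$ lies in $\mathcal F$ then $\bar f$ maps $\hat{\mathcal F}$ outside $\hat{\mathcal F}$. Moreover the single-step claim you call the crux can in fact be arranged: writing $\w=(\w\setminus M)\cup I_1\cup I_2\cup I_3$ with each $I_j$ independent in the degree-$\le 2$ graph of $f$, and using that the non-positive sets form an ideal, each positive block $Z_k$ has a positive intersection with at least one of the four pieces, so a pigeonhole over $k$ yields one piece whose addition to $\mathcal F$ keeps infinitely many blocks positive. (Note that what you need is that $Z_k\cap S$ stays $\mathcal F$-positive, not merely infinite, but the ideal argument gives exactly that.)

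The genuine gap is at the limit stages of the recursion, which your outline does not address and which the single-step claim cannot repair. Each successor step forces one global choice of piece, and the blocks needing a different piece die; the step only guarantees that \emph{infinitely many} of the currently live blocks survive, not that any fixed block survives forever. A decreasing $\w$-sequence of infinite sets of live indices can have empty intersection (e.g.\ stage $n$ kills block $Z_n$), so already at stage $\w$ the invariant ``infinitely many reserved blocks remain positive'' can fail, and with it the infinitude of $\hat{\mathcal F}$. To push through you would need infinitely many blocks that are \emph{never} killed by any of the $\mathfrak c$ neutralisation steps, and nothing in the proposal produces such blocks. This is precisely the difficulty the paper's proof sidesteps by not running a recursion at all: it takes countably many pairwise non-isomorphic \emph{weak $P$-points} (whose existence is Kunen's and van Mill's deep result), so that the set $D$ and each image $\bar f(P)$ is automatically discrete and closure-disjoint from the rest, and then Frol\'{\i}k's lemma gives the disjointness of closures needed to rule out isomorphisms in $\bar D=\hat{\mathcal F}$ for $\mathcal F=\bigcap_{n}p_n$. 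Unless you can supply a limit-stage preservation mechanism (which would likely amount to re-proving something of comparable strength), the construction does not go through.
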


\begin{proof}  Let us recall that a point $x$ of a topological space $X$ is called a {\em weak $P$-point} if $x$ does not belong to the closure of any countable subset $C\subset X\setminus\{x\}$. By Corollaries 4.5.2 and 4.3.2 in \cite{vM}, the space $\beta \omega\setminus\w$ contains $2^{\mathfrak c}$ weak $P$-points. Consequently, we can choose a sequence of free ultrafilters $(p_n)_{n\in\w}$ consisting of pairwise non-isomorphic weak $P$-points in $\beta\w\setminus\w$. The definition of a weak $P$-point implies that the subspace $D=\{p_n\}_{n\in\w}$ of $\beta\w\setminus \w$ is discrete. Now the regularity of $\beta\w$ implies that there exists a family $\{P_n\}_{n\in\w}$ of pairwise disjoint sets in $\w$ such that $P_n\in p_n$ for every $n\in\w$. 

We claim that the closure $\bar D$ of $D$ consists of pairwise non-isomorphic ultrafilters.
To derive a contradiction, assume that $\bar D$ contains two distinct isomorphic ultrafilters $p,q$. Then $p\in\bar P$ and $q\in\bar Q$ for some disjoint sets $P,Q\subset D$. Find a bijection $f:\w\to\w$ such that $f(p)=q$. The bijection $f$ extends to a homeomorphism $\bar f:\beta\w\to\beta\w$. Since the set $D$ consists of pairwise non-isomorphic ultrafilters, $\bar f(P)$ is disjoint with $Q$. So, $\bar f(P)$ and $Q$ are two disjoint countable discrete subspaces of $\beta\w\setminus\w$ consisting of weak $P$-points. Then $\bar f(P)$ is disjoint with the closure of $Q$ and $Q$ is disjoint with the closure of $\bar f(P)$ (which is equal to $\bar f(\bar P)$).
By Lemma 1 of Frol\'{\i}k \cite{Frolik} (see also Theorem 1.5.2 in \cite{vM}),  $\bar Q\cap \bar f(\bar P)=\emptyset$. On the other hand, $q=\bar f(p)\in\bar{Q}\cap\bar f(\bar P)$.
\end{proof}

\begin{remark} Theorem~\ref{l:main} has also a ``near-coherent'' version. Let us recall \cite{Blass86} that two ultrafilters $p,q$ on $\w$ are {\em near-coherent} if there exists a finite-to-one function $f:\w\to\w$ such that $\bar f(p)=\bar f(q)$. It is clear that any two isomorphic ultrafilters on $\w$ are near-coherent. By \cite{BB}, the space $\beta\w$ contains an infinite closed set consisting of pairwise non-near-coherent ultrafilters if and only if $\beta\w$ contains infinitely many non-near-coherent ultrafilters. The latter happens if $\mathfrak u\ge\mathfrak d$. On the other hand, by \cite[9.18]{Blass}, the strict inequality $\mathfrak u<\mathfrak g$ (which is consistent with ZFC by \cite[11.2]{Blass}) implies that all free ultrafilters on $\w$ are near-coherent.
\end{remark} 

\begin{example} There  exist a minmax bornology $\mathcal{B}$ on $\omega$  such that $|\mathcal B^\sharp|=2^{\mathfrak c}$.
\end{example}

\begin{proof} By Theorem~\ref{l:main}, the space $\beta\w$ contains an infinite closed subset $F$ consisting of pairwise non-isomorphic ultrafilters. By Lemma 3.1.2(c) in \cite{vM}, $|F|=2^{\mathfrak c}$. Consider the bornology $\mathcal B=\{B\subset\w:\w\setminus B\in\bigcap_{p\in F}p\}$ and observe that $\mathcal B^\sharp=F$. By Theorem~\ref{t:main}, the bornology $\mathcal B$ is minmax.
\end{proof}

\section{Characterizing bornologies with maximal coarse structure ${\Uparrow}\mathcal B$}

By \cite[Theorem 10.2.1]{b3}, any unbounded set $L$ in a maximal coarse space $(X,\E)$  is {\em large} (which means that $X=E[L]$ for some $E\in\E$).
The converse is not true:

\begin{example} There exists a coarse structure $\E$ on a countable set $X$ such that the coarse space $(X,\E)$ is not maximal but each unbounded subset of $(X,\E)$ is large.
\end{example}

\begin{proof} Let $X=\w$, $G$ be the group of all finitely supported permutations of $X$, and $[G]^{<\w}$ be the family of all finite subsets of $G$. The action of the group $G$ induces the coarse structure $$\E=\{E\subset X\times X:\exists F\in[G]^{<\w},\;\triangle_X\subset E\subset\{(x,y):y\in\{x\}\cup Fx\}\}$$
on $X$, whose bornology coincides with the bornology $\mathcal B$ of all finite subsets of $X$.

The coarse structure $\E$ is not maximal since $\E\subset{\Uparrow}\mathcal B$ and $\E\ne{\Uparrow}\mathcal B$. Indeed, the coarse structure ${\Uparrow}\mathcal B$ contains the entourage $E=\bigcup_{n\in\w}[n^2,(n+1)^2)\times [n^2,(n+1)^2)$ that does not belong to the (finitary) coarse structure $\E$.

On the other hand, each unbounded set $L\subset X$ is large since we can find a bijection $f:X\to X$ such that $X\setminus L\subset f(L)$. This bijection determines the entourage $E:=\{(x,y)\in X\times X:y\in\{x,f(x)\}\}\in\E$ such that $E[L]=X$.
\end{proof}

\begin{theorem}\label{t:max} For a bornology $\mathcal{B}$  on a set $X$, the coarse space
 $(X,{\Uparrow}\mathcal{B})$  is maximal if and only if each unbounded subset of
 $(X, {\Uparrow}\mathcal{B})$ is large.
  \end{theorem}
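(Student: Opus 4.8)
The plan is to prove both implications of Theorem~\ref{t:max}. One direction is immediate from the cited result: if $(X,{\Uparrow}\mathcal B)$ is maximal, then every unbounded subset is large by \cite[Theorem 10.2.1]{b3}, so the ``only if'' part requires no further argument. The content lies entirely in the converse, and the preceding example is a warning that this converse fails for arbitrary coarse structures---so the proof must genuinely exploit the fact that ${\Uparrow}\mathcal B$ is the \emph{largest} coarse structure compatible with $\mathcal B$, not merely that it is some coarse structure whose unbounded sets are all large.

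To prove the ``if'' part, I would argue by contraposition: assume $(X,{\Uparrow}\mathcal B)$ is not maximal and produce an unbounded set that is not large. If the coarse space is not maximal, then by definition there is a strictly stronger coarse structure $\mathcal E\supsetneq{\Uparrow}\mathcal B$ in which $X$ remains unbounded. Since ${\Uparrow}\mathcal B$ is the largest coarse structure with bornology exactly $\mathcal B$, the strictly larger $\mathcal E$ must have a strictly larger bornology $\mathcal B_{(X,\mathcal E)}\supsetneq\mathcal B$. So I would pick a set $B\in\mathcal B_{(X,\mathcal E)}\setminus\mathcal B$: this $B$ is unbounded in ${\Uparrow}\mathcal B$ (it is not in $\mathcal B$) but bounded in $\mathcal E$. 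The natural candidate for a non-large unbounded set is $X\setminus B$ or some unbounded subset built from the complement; the idea is that since $B$ is ${\Uparrow}\mathcal B$-unbounded, its complement should be ``too large'' to be covered by any ${\Uparrow}\mathcal B$-ball around an unbounded set, while $B$ itself witnesses non-largeness if one can show no entourage of ${\Uparrow}\mathcal B$ spreads an unbounded set across it.

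The key technical tool will be the explicit description of ${\Uparrow}\mathcal B$ given in the excerpt: an entourage $E$ belongs to ${\Uparrow}\mathcal B$ if and only if $E[A]\cup E^{-1}[A]\in\mathcal B$ for every $A\in\mathcal B$. I would use this to show that if $L$ is large, meaning $X=E[L]$ for some $E\in{\Uparrow}\mathcal B$, then $L$ cannot be ``$\mathcal E$-bounded but $\mathcal B$-unbounded'' in a way that contradicts maximality. Concretely, the strategy is to show that the set $B$ above (or a suitable unbounded subset of $X\setminus B$) fails to be large: if it were large via some $E\in{\Uparrow}\mathcal B$, then $X=E[B]$, but $E[B]\in\mathcal B$ only when $B\in\mathcal B$ --- wait, $B\notin\mathcal B$, so one instead chooses the unbounded set to be a complement and derives that largeness would force $B\in\mathcal B$ or force $\mathcal E$ to collapse into ${\Uparrow}\mathcal B$. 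The main obstacle is matching the definitions correctly: I must locate \emph{which} unbounded set to exhibit as non-large, and the cleanest route is to take the unbounded set $L=B$ itself and show directly that $X\ne E[B]$ for all $E\in{\Uparrow}\mathcal B$ (otherwise $X=E[B]\in\mathcal B$, contradicting $X\notin\mathcal B$ together with the entourage condition $E[B]\in\mathcal B$).

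Thus the heart of the argument is the following clean observation, which I expect to be the decisive step: \emph{for any $B\notin\mathcal B$ and any $E\in{\Uparrow}\mathcal B$, if $B$ were bounded in some $\mathcal E\supsetneq{\Uparrow}\mathcal B$ then the largeness $X=E[B]$ is impossible}, because largeness combined with the ${\Uparrow}\mathcal B$-entourage condition would force $\mathcal B$ to already contain a cofinite-type set and hence $X$, violating $X\notin\mathcal B$. I would verify that $B\in\mathcal B_{(X,\mathcal E)}\setminus\mathcal B$ is the required unbounded non-large set, completing the contrapositive and hence the ``if'' direction. The anticipated difficulty is purely in bookkeeping the interplay between the three bornologies $\mathcal B$, $\mathcal B_{(X,\mathcal E)}$, and the largeness condition; once the explicit formula for ${\Uparrow}\mathcal B$ is invoked, the contradiction should fall out quickly.
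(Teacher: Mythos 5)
Your overall skeleton is the right one and matches the paper's: cite \cite[Theorem 10.2.1]{b3} for the ``only if'' part, and for the converse take a strictly larger unbounded coarse structure $\E\supsetneq{\Uparrow}\mathcal B$, note that $\E$ cannot be compatible with $\mathcal B$ (since ${\Uparrow}\mathcal B$ is the largest compatible structure), and extract a set $B$ that is bounded in $(X,\E)$ but not in $\mathcal B$, hence unbounded in $(X,{\Uparrow}\mathcal B)$. Up to that point you agree with the paper. But your decisive step is wrong. You claim that $X=E[B]$ for $E\in{\Uparrow}\mathcal B$ is impossible ``because largeness combined with the ${\Uparrow}\mathcal B$-entourage condition would force $\mathcal B$ to contain $X$, violating $X\notin\mathcal B$.'' The entourage condition for ${\Uparrow}\mathcal B$ only guarantees $E[A]\in\mathcal B$ for $A\in\mathcal B$; since $B\notin\mathcal B$, you cannot conclude $E[B]\in\mathcal B$, and indeed no contradiction with $X\notin\mathcal B$ is available. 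Concretely: for $\mathcal B$ the finite subsets of $\w$, $B$ the even numbers and $E=\{(x,y):|x-y|\le 1\}\in{\Uparrow}\mathcal B$, one has $X=E[B]$ with $B\notin\mathcal B$ and no contradiction whatsoever --- unbounded sets can perfectly well be large in $(X,{\Uparrow}\mathcal B)$; that is exactly the hypothesis of the theorem. You half-noticed this problem mid-argument (``wait, $B\notin\mathcal B$'') and then reverted to the same flawed claim.

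The contradiction you need is with the \emph{unboundedness of} $\E$, not with $X\notin\mathcal B$. Since $B$ is unbounded in $(X,{\Uparrow}\mathcal B)$, the hypothesis gives $X=E[B]$ for some $E\in{\Uparrow}\mathcal B\subset\E$. But $B$ is bounded in $(X,\E)$, say $B\subset E'[x]$ with $E'\in\E$, so $X=E[B]\subset (E'\circ E)[x]$ is bounded in $(X,\E)$ --- contradicting the choice of $\E$ as an unbounded coarse structure witnessing non-maximality. This is the paper's argument; note it is a proof by contradiction using the largeness hypothesis directly, rather than a contrapositive exhibiting a non-large set, although once repaired your version does show that $B$ itself is not large in $(X,{\Uparrow}\mathcal B)$. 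Your detour through $X\setminus B$ is unnecessary and should be dropped.
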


\begin{proof} The ``only if'' part follows from Theorem 10.2.1 in \cite{b3}.
\smallskip

To prove the ``if'' part, assume that each unbounded subset of
$(X, {\Uparrow}\mathcal{B})$
 is large, but $(X, {\Uparrow}\mathcal{B})$
   is  not maximal.
Then there is an unbounded coarse structure $\E$  on  $X$  such  that ${\Uparrow}\mathcal{B}\subsetneq \E$. By the definition of ${\Uparrow}\mathcal B$, the coarse structure $\E$ is not compatible with the bornology $\mathcal B$. Consequently, there exists a set $L\subset X$ which is bounded in $(X,\E)$ but does not belong to $\mathcal B$. Then $L$ is unbounded in $(X,{\Uparrow}\mathcal B)$ and hence $X=E[L]$ for some $E\in{\Uparrow}\mathcal B\subset\E$, which implies that $X$ is bounded in $(X,\E)$. But this contradicts the choice of $\E$.
  \end{proof}

\begin{example}\rm Theorem~\ref{t:max} implies that for any infinite set $X$ and the bornology $\mathcal B:=\{A\subset X:|A|<|X|\}$ the coarse structure ${\Uparrow}\mathcal B$ is maximal. Indeed, for any subset $L\subset X$ of cardinality $|L|=|X|$, we can find a bijection $f$ of $X$ such that $X\setminus L\subset f(L)$. Then $E=\{(x,y)\in X\times X:y\in\{x,f(x)\}\}$ is an entourage in ${\Uparrow}\mathcal B$ such that $X=E[L]$, which means that the set $L$ in large in $(X,{\Uparrow}\mathcal B)$.
\end{example}

 \smallskip 

Following  \cite{b1},   we say that a coarse structure  $\mathcal{E}$  on $X$   is {\it relatively maximal}
 if $\E={\Uparrow} \mathcal{B}_{(X, \mathcal{E})}$.
Clearly,  $\E$  is  relatively  maximal if either $\E$  is maximal or
$\mathcal{B}_{(X, \mathcal{E})}$ is minmax.

\begin{question} Given a coarse structure $\E$,  how can one detect whether $\E$  is  relatively maximal?
\end{question}

\begin{remark} In light of  Theorem~\ref{t:main},  it  is a very rare case  when  the coarse structure
 $\mathcal{E}$ on $X$ is uniquely defined  by the   bornology
 $\mathcal{B}_{(X, \mathcal{E})}$.
 
We denote by $so (X,\mathcal{E})$ the set of all  slowly oscillating functions  of
$(X,\mathcal{E})$.
By Theorem 7.3.1 \cite{b3}, if the coarse structures $\mathcal{E}$  and $\mathcal{E}^{\prime}$  on $X$  have linearly ordered bases and
$\mathcal{B}_{(X, \mathcal{E})}=\mathcal{B}_{(X, \mathcal{E})^{\prime}}$,
$so{(X, \mathcal{E})}=so{(X, \mathcal{E}^{\prime})}$,
 then
 $\mathcal{E}=\mathcal{E}^{\prime}$.
We denote by
$\delta_{(X,\mathcal{E})}$
 the binary relation on
the power-set  $2^X$ of $X$ 
 defined by
 $A \delta_{(X,\mathcal{E})} B$
   if and only if there  exists $E\in \mathcal{E}$   such that
   $A\subseteq E[B]$ and $B\subseteq E[A]$.
By Theorem 7.5.3 from \cite{b3}, if  coarse  structures  $\E,\E'$ on $X$  have linearly ordered bases and
$\delta_{(X,\mathcal{E})}=\delta_{(X,\mathcal{E}^{\prime})}$
  then $\mathcal{E}=\mathcal{E}^{\prime}$.
\end{remark}

\newpage

\end{document}